\newtheorem{theorem}{Theorem}[section]
\newtheorem{definition}[theorem]{Definition}
\newtheorem{lemma}[theorem]{Lemma}
\newtheorem{claim}{Claim}
\begin{document}
\textwidth 150mm \textheight 225mm
\title{Spectral condition for $k$-factor-criticality in $t$-connected graphs\thanks{Supported by the
National Natural Science Foundation of China (No. 12271439) and the China Scholarship Council (No. 202306290194).}}
\author{{Tingyan Ma$^{a,b,c}$, Edwin R. van Dam$^{c}$, Ligong Wang$^{a,b,}$\thanks{Corresponding author.}}\\
{\small $^a$School of Mathematics and Statistics, Northwestern
Polytechnical University,}\\ {\small  Xi'an, Shaanxi 710129,
P.R. China.}\\
{\small $^b$Xi'an-Budapest Joint Research Center for Combinatorics, Northwestern
Polytechnical University,}\\
{\small Xi'an, Shaanxi 710129,
P.R. China,}\\
{\small \small $^c$Department of Econometrics and O.R., Tilburg University, the Netherlands.}\\
{\small E-mail: matingylw@163.com, Edwin.vanDam@uvt.nl, lgwangmath@163.com}}
\date{}
\maketitle
\begin{center}
\begin{minipage}{120mm}
\vskip 0.3cm
\begin{center}
{\small {\bf Abstract}}
\end{center}
{\small A graph $G$ is called $k$-factor-critical if $G-S$ has a perfect matching for every $S\subseteq V(G)$ with $|S|=k$. A connected graph $G$ is called $t$-connected if it has more than $t$ vertices and remains connected whenever fewer than $t$ vertices are removed. We give a condition on the number of edges and a condition on the spectral radius for $k$-factor-criticality in $t$-connected graphs.
\vskip 0.1in \noindent {\bf Keywords}: $k$-factor-critical graphs; $t$-connected; spectral radius; closure. \vskip
0.1in \noindent {\bf AMS Subject Classification (2020)}: \ 05C50, 05C40, 05C05.}
\end{minipage}
\end{center}
\section{Introduction}
Favaron \cite{Favaron 1996} introduced the notion of $k$-factor-critical graphs.

\begin{definition}\label{de1}
A graph $G$ of order $n$ is said to be $k$-factor-critical if $G-S$ has a perfect matching for every $S\subseteq V(G)$ with $|S|=k$.
\end{definition}

Enomoto, Plummer and Saito \cite{Enomoto1999} obtained a relationship between neighborhoods of independent sets and $k$-factor-critical graphs. In \cite{Favaron 1996}, Favaron gave a sufficient condition for a graph to be $k$-factor-critical and proved that a $t$-connected graph $G$ with independence number $\alpha(G)$ at most $t-k+1$ is $k$-factor-critical.

\begin{lemma}\label{le1} {\normalfont(\cite{Favaron 1996})} Let $k$ and $t$ be nonnegative integers. Then every $t$-connected graph $G$ on $n$ vertices with $\alpha(G)\leq t-k+1$ and $n\equiv k~\rm{(mod~2)}$ is $k$-factor-critical.
\end{lemma}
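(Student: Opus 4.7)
The natural route is through Tutte's theorem. I would first record the standard Tutte-type characterization: $G$ is $k$-factor-critical if and only if $n \equiv k \pmod{2}$ and $o(G-U) \le |U|-k$ for every $U \subseteq V(G)$ with $|U| \ge k$, where $o(\cdot)$ denotes the number of odd components. This follows by applying Tutte's $1$-factor theorem to each graph $G-S$ with $|S|=k$, and then writing $U = S \cup T$, where $T$ is a (would-be) Tutte set of $G-S$.

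With this reformulation in hand, I would argue by contradiction. Suppose some $U \subseteq V(G)$ with $|U| \ge k$ violates the condition, so $o(G-U) \ge |U|-k+1$. A parity check, using $n \equiv k \pmod{2}$ together with the elementary fact that $o(G-U)$ has the same parity as $n-|U|$, strengthens this to $o(G-U) \ge |U|-k+2$. I would then split on the size of $U$. If $|U| < t$, then $t$-connectivity of $G$ forces $G-U$ to be connected (and nonempty, since $n>t$), so $o(G-U) \le 1$; combined with $|U| \ge k$ this contradicts $o(G-U) \ge |U|-k+2 \ge 2$. If $|U| \ge t$, then picking one vertex from each odd component of $G-U$ produces an independent set in $G$ of size at least $|U|-k+2 \ge t-k+2$, contradicting the hypothesis $\alpha(G) \le t-k+1$.

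The only delicate point is the parity upgrade from $+1$ to $+2$; without it the borderline case $|U|=t$ would just barely fail to yield a contradiction in the independence-number step. Beyond this, the argument is essentially mechanical once the Tutte reformulation is in place, and I do not foresee any genuine obstacle.
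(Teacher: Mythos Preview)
Your argument is correct. The Tutte-type reformulation you state is standard (and follows exactly as you sketch, by writing any $U$ with $|U|\ge k$ as $S\cup T$ with $|S|=k$), the parity upgrade from $o(G-U)\ge |U|-k+1$ to $o(G-U)\ge |U|-k+2$ is valid since $o(G-U)\equiv n-|U|\equiv |U|-k\pmod 2$, and the two cases $|U|<t$ and $|U|\ge t$ close out cleanly as you describe. You are also right that the parity step is essential: without it, the case $|U|=t$ would only yield an independent set of size $t-k+1$, which does not exceed $\alpha(G)$.

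There is no proof in the paper to compare against: Lemma~\ref{le1} is quoted from Favaron's paper \cite{Favaron 1996} and used as a black box. Your route via the Tutte--Berge deficiency condition is in fact the argument Favaron gives, so your proposal recovers the original proof rather than offering an alternative.
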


Plummer and Saito \cite{Plummer2000} studied a relationship between $k$-factor-criticality and various closure operations and gave necessary and sufficient conditions for a graph to be $k$-factor-critical in terms of these closures.

Let $P$ be a property defined on all the graphs of order $n$, and let $l$ be a positive integer. We say that $P$ is $l$-stable if whenever $G+uv$ has the property $P$ with $d_{G}(u)+d_{G}(v)\geq l$, then $G$ itself has the property $P$. The concept of closure of a graph was used implicitly by Ore \cite{Ore1960},
and formally introduced by Bondy and Chvatal \cite{Bondy1976}.
Fix an integer $l\geq 0$, the $l$-closure of a graph $G$ is the graph obtained from $G$
by successively joining pairs of nonadjacent vertices whose degree sum is at least $l$
until no such pair exists. Denote by $\mathcal{C}_{l}(G)$ the $l$-closure of $G.$ Then we have
$$d_{\mathcal{C}_{l}(G)}(u)+d_{\mathcal{C}_{l}(G)}(v)\leq l-1$$ for any pair of nonadjacent vertices $u$ and $v$ of $\mathcal{C}_{l}(G).$

\begin{lemma}\label{le2} {\normalfont(\cite{Plummer2000})} Let $G$ be a connected graph of order $n$ and let $1\leq k\leq n-2$ be an integer. Then $G$ is $k$-factor-critical if and only if $\mathcal{C}_{n+k-1}(G)$ is $k$-factor-critical.
\end{lemma}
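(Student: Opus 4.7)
The plan is to establish that the property ``$G$ is $k$-factor-critical'' is $(n+k-1)$-stable, from which the lemma follows by induction on the edges added during the closure operation. The ``only if'' direction is immediate: for any $S\subseteq V(G)$ with $|S|=k$, a perfect matching of $G-S$ remains a perfect matching in the larger graph $\mathcal{C}_{n+k-1}(G)-S$, so $\mathcal{C}_{n+k-1}(G)$ inherits $k$-factor-criticality from $G$.

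For the ``if'' direction I would prove the key stability step: if $u,v\in V(G)$ are nonadjacent with $d_G(u)+d_G(v)\geq n+k-1$ and $H=G+uv$ is $k$-factor-critical, then so is $G$. Fix $S\subseteq V(G)$ with $|S|=k$. If $\{u,v\}\cap S\neq\emptyset$, the added edge is deleted together with $S$, so $H-S=G-S$ and any perfect matching of $H-S$ serves. If $u,v\notin S$, pick a perfect matching $M$ of $H-S$; if $uv\notin M$ then $M\subseteq E(G-S)$ and we are again done, so we may assume $uv\in M$.

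The main technical step is then the standard swap. For each $x\in N_{G-S}(u)$ let $x'$ denote its $M$-partner and set $A'=\{x':x\in N_{G-S}(u)\}$. Since $u$ is $M$-matched to $v$ and $uv\notin E(G)$, one verifies that both $A'$ and $N_{G-S}(v)$ lie in $V(G-S)\setminus\{u,v\}$ and that $|A'|=d_{G-S}(u)$. If there is some $x'\in A'\cap N_{G-S}(v)$, with corresponding $x\in N_{G-S}(u)$, then $(M\setminus\{uv,xx'\})\cup\{ux,vx'\}$ is a perfect matching of $G-S$. Otherwise $A'$ and $N_{G-S}(v)$ are disjoint, whence
$$d_{G-S}(u)+d_{G-S}(v)\leq (n-k)-2.$$
On the other hand, removing the $k$ vertices in $S$ decreases each degree by at most $k$, so
$$d_{G-S}(u)+d_{G-S}(v)\geq d_G(u)+d_G(v)-2k\geq n-k-1,$$
a contradiction. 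Hence the swap is always possible.

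I expect the main obstacle to be the bookkeeping around this swap rather than any deep argument: one must carefully confirm that the $M$-partners of distinct neighbors of $u$ are distinct and genuinely avoid $\{u,v\}$, so that $|A'|=d_{G-S}(u)$ and the two sets $A',N_{G-S}(v)$ compete for the same $n-k-2$ available vertices. Once the stability statement is established, iterating it along the edges of $\mathcal{C}_{n+k-1}(G)\setminus E(G)$ transfers $k$-factor-criticality from $\mathcal{C}_{n+k-1}(G)$ back to $G$, completing the lemma.
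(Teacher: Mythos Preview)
Your argument is correct. The paper does not supply its own proof of this lemma; it merely quotes the result from Plummer and Saito \cite{Plummer2000}. Your stability argument is exactly the standard one underlying that reference: reduce to a single added edge, handle the trivial cases where $uv$ is absent from $H-S$ or from the chosen matching, and otherwise perform the $M$-partner swap, with the degree-sum hypothesis forcing $A'\cap N_{G-S}(v)\neq\emptyset$ via the count $d_{G-S}(u)+d_{G-S}(v)\geq (n+k-1)-2k>n-k-2$. The bookkeeping you flag (that the partner map is injective and that $A',\,N_{G-S}(v)\subseteq V(G-S)\setminus\{u,v\}$ because $uv\notin E(G)$ and $uv\in M$) is exactly what is needed and you have it right.
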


There are many sufficient conditions to assure that a graph contains an $[a, b]$-factor \cite{Fan2023, Fan2022, Kouider2004, Li2003, Li1998, Suil 2022}. O \cite{Suil 2021}, Wei and Zhang \cite{Wei2023} characterized sufficient conditions in terms of $e(G)$ to guarantee a graph to have $1$-factor and $k$-factor, respectively. In recent years, inspired by the related results of sufficient conditions of graph factor, the problem of finding spectral conditions for graphs having certain factor or structural properties has received considerable attention. Ao et al. \cite{Ao2023} investigated a sufficient condition in terms of the number of $r$-cliques to guarantee the existence of a $k$-factor in a graph with minimum degree at least $\delta$.

The sufficient conditions for the existence of $0$-factor-critical graphs have been studied in \cite{Liu2020, Suil 2021, Zhang 2021}. Recently, many researchers concentrated on the study of $k$-factor-critical graphs. Fan and Lin \cite{Fan Arx} provided a spectral condition for a connected graph with minimum degree $\delta$ to be $k$-factor-critical. Ao, Liu and Yuan \cite{Ao2025} improved the results of Fan and Lin \cite{Fan Arx}, and showed the number of $r$-cliques condition to guarantee a graph with minimum degree at least $\delta$ to be $k$-factor-critical. Zheng et al. \cite{Zheng 2024} established the following sufficient conditions in terms of the number of edges, signless Laplacian spectral radius $q(G)$ and distance signless Laplacian spectral radius $\lambda(G)$ to guarantee the existence of $k$-factor-criticality in a graph with minimum degree.

\begin{theorem}\label{th1} {\normalfont(\cite{Zheng 2024})} Suppose that $G$ is a connected graph of order $n\geq \max\{6\delta-5k+5, \frac{1}{6}[\delta^{2}-(2k-11)\delta+k^{2}-5k+10]\}$ with minimum degree $\delta\geq k$, where $n\equiv k~(\rm mod~2)$ and $k\geq1$. If $e(G)\geq e(K_{\delta}\vee (K_{n+k-2\delta-1}+(\delta-k+1)K_{1}))$, then $G$ is $k$-factor-critical unless $G\cong K_{\delta}\vee (K_{n+k-2\delta-1}+(\delta-k+1)K_{1})$.
\end{theorem}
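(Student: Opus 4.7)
\medskip
\noindent\textbf{Proof proposal.} Write $G_0 := K_\delta \vee (K_{n+k-2\delta-1} + (\delta-k+1)K_1)$. My plan is to combine the closure reduction of Lemma \ref{le2} with the independence/connectivity criterion of Lemma \ref{le1}, then compare edge counts. Suppose for contradiction that $G$ satisfies the hypotheses with $e(G) \geq e(G_0)$ yet is not $k$-factor-critical and $G \not\cong G_0$. Put $H := \mathcal{C}_{n+k-1}(G)$. By Lemma \ref{le2}, $H$ is not $k$-factor-critical either; since $H \supseteq G$ one has $\delta(H) \geq \delta$ and $e(H) \geq e(G) \geq e(G_0)$, and every non-adjacent pair $u,v$ of $H$ satisfies $d_H(u)+d_H(v) \leq n+k-2$.

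Next I would extract a large independent set in $H$. Applying Tutte's theorem to $H-S$ for some $S$ with $|S|=k$ for which $H-S$ has no perfect matching yields $T \subseteq V(H-S)$ with $o(H-S-T) > |T|$, and the parity condition $n \equiv k \pmod 2$ promotes this to $o(H-S-T) \geq |T|+2$. Picking one vertex from each odd component produces an independent set $I \subseteq V(H)$ of size $\alpha \geq |T|+2$. Combining the pairwise degree-sum bound on $I$ with $\delta(H) \geq \delta$ forces $2\delta \leq n+k-2$ and, in the tight case, pins $\alpha$ at $\delta-k+2 = \alpha(G_0)$. Moreover the structure produces a vertex $v^*$ with $d_H(v^*) = \delta$ whose $n-1-\delta$ non-neighbors all have degree at most $n+k-2-\delta$, the analogue of an ``isolated'' vertex of $G_0$.

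The final step is an edge count. Partitioning $V(H) = \{v^*\} \cup (V(H)\setminus N[v^*]) \cup N(v^*)$ already gives $2e(H) \leq (n-\delta)(n+k-2-\delta) + \delta(n-1)$, but this is loose; sharpening it requires combining the partition with the independent-set bound $e(H) \leq \binom{n-\alpha}{2} + \alpha(n+k-2)/2$ (from pairwise degree sums inside $I$) and separating out the $\delta-k+1$ vertices of $I$ whose degree is forced to be exactly $\delta$. The two hypotheses on $n$, namely the linear $n \geq 6\delta - 5k + 5$ and the quadratic $n \geq \tfrac{1}{6}[\delta^2-(2k-11)\delta+k^2-5k+10]$, look calibrated so that this refined estimate yields $e(H) \leq e(G_0)$, with equality forcing $H \cong G_0$ and hence $G = H \cong G_0$, a contradiction. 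The main obstacle I expect is precisely this tightness/uniqueness analysis: the naive bounds each overshoot by quantities depending quadratically on $\delta$, so matching both numerical hypotheses on $n$ and ruling out all non-extremal equality cases is where the real work lies.
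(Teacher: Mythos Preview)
This theorem is not proved in the paper at all: it is Theorem~\ref{th1}, quoted from Zheng, Li, Luo and Wang \cite{Zheng 2024} purely as background motivation for the paper's own Theorems~\ref{th4} and~\ref{th5}. There is therefore no proof in the present paper to compare your proposal against.

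For what it is worth, your sketch follows a different template from the one the paper uses for its own analogous result (Theorem~\ref{th4}). There the hypothesis is $t$-connectivity rather than a minimum-degree bound, and the argument runs through Lemma~\ref{le8}: one shows the closure $H$ contains a clique of size at least $n+k-t-1$, then uses Lemma~\ref{le1} to cap $\omega(H)$ from above, and finally pins down the structure of $H$ via a sequence of claims exploiting $t$-connectivity. Your route via Tutte's theorem and a direct edge count on an independent set is plausible for the minimum-degree setting (where Lemma~\ref{le1} is unavailable since $\delta(G)\geq\delta$ does not give $\delta$-connectivity), but as you yourself note, the tightness analysis matching the two numerical hypotheses on $n$ is left entirely open, and the step ``in the tight case, pins $\alpha$ at $\delta-k+2$'' is asserted rather than argued. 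If you want to verify the result you should consult \cite{Zheng 2024} directly.
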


\begin{theorem}\label{th2} {\normalfont(\cite{Zheng 2024})} Suppose that $G$ is a connected graph of order $n\geq \max\{\frac{19}{3}\delta-3k+3, \delta-1+(\delta+1)(\delta-k)\}$ with minimum degree $\delta\geq k$, where $n\equiv k~(\rm mod~2)$ and $k\geq1$. If $q(G)\geq q(K_{\delta}\vee (K_{n+k-2\delta-1}+(\delta-k+1)K_{1}))$, then $G$ is $k$-factor-critical unless $G\cong K_{\delta}\vee (K_{n+k-2\delta-1}+(\delta-k+1)K_{1})$.
\end{theorem}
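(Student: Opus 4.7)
The plan is to argue by contradiction: suppose $G$ satisfies all the hypotheses of Theorem \ref{th2} but is not $k$-factor-critical, and derive that $G$ must be isomorphic to the claimed extremal graph $H_\delta := K_{\delta}\vee(K_{n+k-2\delta-1}+(\delta-k+1)K_{1})$. The first step is to extract a Tutte-type obstruction: failure of $k$-factor-criticality, together with the parity hypothesis $n\equiv k\pmod{2}$, yields $W\subseteq V(G)$ with $|W|=s\geq k$ and $o(G-W)\geq s-k+2$. Writing $D_1,\dots,D_c$ for the odd components of $G-W$, the minimum-degree hypothesis applied to any $v\in D_i$ forces $|D_i|+s-1\geq \delta$; in particular, a singleton odd component forces $s\geq \delta$, and a short case analysis using the total vertex count $n$ splits the argument into the two ranges $s\geq \delta$ and $s<\delta$.

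Next I would bound $q(G)$ from above by the signless Laplacian spectral radius of an intermediate graph on the same vertex set that retains the obstruction but is as dense as possible. Concretely, I would turn $W$ into a clique, join $W$ fully to $V(G)\setminus W$, turn each $D_i$ into a clique, and then consolidate components so that at most one large odd clique $K_{n+k-2s-1}$ together with $s-k+1$ isolated vertices remains outside $W$, producing the one-parameter family
\[
H_s \;=\; K_{s}\vee\bigl(K_{n+k-2s-1}+(s-k+1)K_{1}\bigr),\qquad s\geq \max\{k,\delta\}.
\]
Since $G$ is a spanning subgraph of $H_s$, Perron--Frobenius monotonicity for the signless Laplacian gives $q(G)\leq q(H_s)$ with equality iff $G=H_s$; a parallel but more delicate construction is needed for the subsidiary range $s<\delta$ (where odd components cannot be reduced to singletons), yielding a second family to compare against.

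The third and principal step is to establish strict monotonicity $q(H_\delta)>q(H_{\delta+1})>\cdots$ across the feasible range of $s$, so that the hypothesis $q(G)\geq q(H_\delta)$ forces $s=\delta$ and hence $G=H_\delta$. The graph $H_s$ admits an equitable three-class partition (the clique $K_s$, the clique $K_{n+k-2s-1}$, and the $s-k+1$ independent vertices), so $q(H_s)$ is the largest root of an explicit cubic $f_s(x)$ whose coefficients are polynomials in $n,k,\delta,s$. I would compare $f_s$ and $f_{s+1}$ --- for instance by evaluating $f_{s+1}$ at the root of $f_s$ and checking that the sign is correct --- which is where the numerical hypotheses $n\geq \tfrac{19}{3}\delta-3k+3$ and $n\geq \delta-1+(\delta+1)(\delta-k)$ enter crucially. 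This polynomial estimation, together with ruling out the case $s<\delta$ by an analogous spectral comparison, is the main technical obstacle I anticipate. Chaining $q(G)\leq q(H_s)\leq q(H_\delta)$ with the assumed lower bound then forces all equalities, yielding $G\cong H_\delta$ as required.
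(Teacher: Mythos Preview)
The paper does not contain a proof of this statement: Theorem~\ref{th2} is quoted verbatim from Zheng, Li, Luo and Wang~\cite{Zheng 2024} as background, alongside Theorems~\ref{th1} and~\ref{th3}, to motivate the new results Theorems~\ref{th4} and~\ref{th5}. There is therefore no ``paper's own proof'' to compare your proposal against.

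For what it is worth, your outline follows the standard template for such extremal spectral results (Tutte-type obstruction $\Rightarrow$ embed $G$ in a canonical $H_s$ $\Rightarrow$ compare $q(H_s)$ across $s$ via the quotient matrix of an equitable partition), and this is almost certainly the shape of the argument in~\cite{Zheng 2024}. The step you flag as the ``main technical obstacle'' --- the cubic comparison showing $q(H_\delta)>q(H_s)$ for all admissible $s>\delta$, together with the separate treatment of $s<\delta$ --- is indeed where the specific lower bounds on $n$ are consumed, and is the part that requires genuine computation rather than structural reasoning. But none of that appears in the present paper, which uses an entirely different mechanism (the $(n{+}k{-}1)$-closure and a clique-number argument) to prove its own Theorems~\ref{th4} and~\ref{th5}.
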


\begin{theorem}\label{th3} {\normalfont(\cite{Zheng 2024})} Suppose that $G$ is a connected graph of order $n\geq \max\{12\delta-6k+5, \frac{2}{3}[\delta^{3}+\frac{4}{3}k\delta^{2}]\}$ with minimum degree $\delta\geq k$, where $n\equiv k~(\rm mod~2)$ and $k\geq1$. If $\lambda(G)\geq \lambda(K_{\delta}\vee (K_{n+k-2\delta-1}+(\delta-k+1)K_{1}))$, then $G$ is $k$-factor-critical unless $G\cong K_{\delta}\vee (K_{n+k-2\delta-1}+(\delta-k+1)K_{1})$.
\end{theorem}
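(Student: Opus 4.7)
The plan is to argue by contradiction: suppose $G$ satisfies the hypotheses and $\lambda(G)\geq \lambda(G^{*})$ where $G^{*}:=K_{\delta}\vee(K_{n+k-2\delta-1}+(\delta-k+1)K_{1})$, but $G$ is neither $k$-factor-critical nor isomorphic to $G^{*}$, and derive $\lambda(G)<\lambda(G^{*})$.

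First I would extract structure. Since $G$ is not $k$-factor-critical, some $S\subseteq V(G)$ with $|S|=k$ has the property that $G-S$ contains no perfect matching. Tutte's theorem applied to $G-S$, combined with the parity condition $n\equiv k\pmod 2$, produces $T\subseteq V(G)\setminus S$ with $o((G-S)-T)\geq |T|+2$. Setting $W=S\cup T$ and $s=|W|$, the subgraph $G-W$ has $q\geq s-k+2$ odd components. Since any singleton component of $G-W$ has all its neighbors in $W$, the minimum-degree hypothesis yields $s\geq\delta$; a vertex count gives $s\leq (n+k-2)/2$.

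Second, I would reduce to a canonical family. The claim is that among all connected graphs with the above structural properties, the maximum of $\lambda$ is attained at $G^{*}$. I would first reduce to the case in which every component of $G-W$ is a clique and $W$ is completely joined to the rest; such graphs have diameter at most $2$, making their distance signless Laplacian easier to analyze. Among graphs of the shape $K_{s}\vee(K_{n_{1}}+\cdots+K_{n_{q}})$ with $q\geq s-k+2$ and $\sum n_{i}=n-s$, a quotient-matrix comparison shows that the extremal configuration consolidates odd components into $s-k+1$ singletons plus one large clique $K_{n+k-2s-1}$; denote this graph $H_{s}$, noting $H_{\delta}=G^{*}$. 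Third, I would show $\lambda(H_{s})<\lambda(H_{\delta})$ for every $s$ with $\delta<s\leq \lfloor(n+k-2)/2\rfloor$. The partition of $V(H_{s})$ into $V(K_{s}),V(K_{n+k-2s-1}),V((s-k+1)K_{1})$ is equitable for $D^{Q}(H_{s})$; the $3\times 3$ quotient matrix has entries that are explicit polynomials in $n,s,k$, and its largest eigenvalue can be tracked as a function of $s$ via discrete-difference or resultant methods. The hypotheses $n\geq 12\delta-6k+5$ and $n\geq \tfrac{2}{3}(\delta^{3}+\tfrac{4}{3}k\delta^{2})$ are precisely what is needed to force this monotonicity across the full admissible range of $s$, with the cubic bound controlling the sign of a leading coefficient in the relevant difference.

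The main obstacle will be Step 2, the reduction to the canonical family. Unlike the signless-Laplacian case of Theorem \ref{th2}, the distance signless Laplacian is monotone under edge addition in the direction opposite to what makes the reduction automatic: removing edges tends to increase $\lambda$ (distances and transmissions both grow), so completing components to cliques naively decreases $\lambda$, and we must argue the reverse comparison via Perron-vector estimates that exploit the diameter-$2$ structure of all graphs in the family. Tracking equality through both Step 2 and Step 3 will ultimately be needed to force $G\cong G^{*}$ in the equality case. Once Step 2 is in place, Step 3 is a substantial but essentially routine monotonicity calculation.
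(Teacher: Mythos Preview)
The paper does not prove this statement. Theorem~\ref{th3} is quoted from Zheng, Li, Luo and Wang~\cite{Zheng 2024} as one of three background results (Theorems~\ref{th1}--\ref{th3}) that motivate the paper's own contributions, which are Theorems~\ref{th4} and~\ref{th5}. There is therefore no ``paper's own proof'' to compare your proposal against; any comparison would have to be with the original source~\cite{Zheng 2024}, not this paper.

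As an aside on the proposal itself: you have correctly identified the central difficulty. For the distance signless Laplacian, adding edges to a connected graph can only shorten distances and lower transmissions, so $\lambda$ is \emph{nonincreasing} under edge addition. This means the standard ``complete everything to cliques and join $W$ to the rest'' reduction, which works out of the box for $\rho(G)$ and $q(G)$ because those radii increase with edges, runs the wrong way here: densifying to reach the canonical family $H_s$ would push $\lambda$ \emph{down}, not up, and so does not by itself show that $H_s$ dominates the original $G$. Your Step~2 as written (``completing components to cliques naively decreases $\lambda$, and we must argue the reverse comparison'') acknowledges this but does not supply a mechanism, and the suggested fix via ``Perron-vector estimates that exploit the diameter-$2$ structure'' is problematic because the non-canonical $G$ need not have diameter $2$. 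The actual argument in~\cite{Zheng 2024} would be the place to see how this obstruction is handled.
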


For more results on $k$-factor-critical graphs, see \cite{Cai 2024, Zhai 2022, Zhang 2016, Zhang 2024, Zhou Si2024}. A spanning $k$-ended-tree of $G$ is a spanning tree with at most $k$ leaves, where $k\geq2$ is an integer. Motivated by the above and the results of spectral condition for spanning $k$-trees in $t$-connected graphs \cite{Li2024} or spanning $k$-ended trees in $t$-connected graph \cite{Zheng 2023}, we give a sufficient condition in terms of the number of edges for $k$-factor-criticality of $t$-connected graphs.

\begin{theorem}\label{th4}
Let $G$ be a $t$-connected graph of order $n\geq\frac{15t-11k+29}{2}$, where $t$ and $k$ are integers with $t\geq k\geq1$ and $n\equiv k~(\rm mod~2)$. If
$$e(G)>{n+k-t-2\choose 2}+(t-k+2)(t+1),$$
then $G$ is $k$-factor-critical unless $\mathcal{C}_{n+k-1}(G)\cong K_{t}\vee (K_{n+k-2t-1}+(t-k+1)K_{1})$.
\end{theorem}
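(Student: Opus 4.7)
The plan is to pass to the closure $G':=\mathcal{C}_{n+k-1}(G)$ and analyze the resulting structure using Lemma~\ref{le1}. Since closure only adds edges, $G'$ is $t$-connected with $e(G')\ge e(G)$, and by Lemma~\ref{le2}, $G$ is $k$-factor-critical iff $G'$ is. Hence it suffices to prove the contrapositive: if $G'$ is not $k$-factor-critical and $G'\not\cong H:=K_{t}\vee(K_{n+k-2t-1}+(t-k+1)K_{1})$, then $e(G')\le\binom{n+k-t-2}{2}+(t-k+2)(t+1)$. Assume $G'$ is not $k$-factor-critical. Lemma~\ref{le1} gives $\alpha(G')\ge t-k+2$; fix an independent set $I=\{v_{1},\ldots,v_{s}\}$ of size $s:=t-k+2$, ordered so that $d(v_{1})\le\cdots\le d(v_{s})$, and set $\delta^{*}:=d(v_{1})\ge t$ (by $t$-connectedness), $J:=V(G')\setminus I$, $|J|=n+k-t-2$. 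The basic bound is
\[
e(G')=e(G'[J])+\sum_{v\in I}d_{G'}(v)\le\binom{n+k-t-2}{2}+\sum_{v\in I}d_{G'}(v),
\]
with equality in the first term iff $G'[J]$ is a clique.

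Next I would split into two cases on $\delta^{*}$, exploiting the closure property $d(u)+d(v)\le n+k-2$ for non-adjacent pairs. In Case~1 ($\delta^{*}\ge t+1$), the constraint forces every $u\in J$ with a non-neighbor in $I$ to satisfy $d_{G'}(u)\le n+k-t-3=|J|-1$; when $G'[J]$ is a clique this pins $|I\cap N(u)|\in\{0,s\}$ for each $u\in J$, so that $G'\cong K_{|A|}\vee(K_{|J|-|A|}+sK_{1})$ with $|A|=\delta^{*}$. A Hall-type check (matching the $s$ isolated vertices into the $K_{|A|-k}$ remaining after deleting any $k$-set in $K_{|A|}$) shows such a graph is $k$-factor-critical exactly when $|A|\ge t+2$, forcing $|A|=t+1$ and giving $e(G')=\binom{n+k-t-2}{2}+(t-k+2)(t+1)$. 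In Case~2 ($\delta^{*}=t$), under the clique-$J$ assumption the closure applied to $(v_{s},u)$ for $u\in J\setminus N(v_{s})$ pins $d(v_{s})\in\{t,t+1,|J|\}$: the value $|J|$ forces all other $v_{j}$ to have degree $t$ and to share a common $t$-neighborhood in $J$, exhibiting $G'\cong H$; the values $t$ and $t+1$ yield $\sum_{v\in I}d_{G'}(v)\le s(t+1)=(t-k+2)(t+1)$, hence the theorem bound.

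The main obstacle will be handling the non-clique-$J$ subcases of both cases rigorously. Dropping a single edge of $G'[J]$ only lowers the raw bound by $1$, which still leaves $e(G')$ above the theorem bound for large $n$, since the gap $e(H)-\binom{n+k-t-2}{2}-(t-k+2)(t+1)=n-3t+2k-4$ is linear in $n$. A more careful structural analysis is needed to show that any deviation of $G'[J]$ from the clique structure forces compensating drops in $\sum_{v\in I}d_{G'}(v)$ through the closure constraints; the numerical hypothesis $n\ge(15t-11k+29)/2$ is precisely what makes this trade-off push $e(G')$ below the theorem bound, ruling out all extremal configurations other than $H$ (reached in Case~2) and $K_{t+1}\vee(K_{n-2t+k-3}+(t-k+2)K_{1})$ (reached in Case~1, at equality).
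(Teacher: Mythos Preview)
Your clique-$J$ analysis is essentially correct and, once you know that $G'$ has a clique of size $n+k-t-1$, matches what the paper does (its Claims 3--6). The problem is that you never establish that large clique: you only \emph{assume} $G'[J]$ is complete in each case and defer the rest. That deferral is the whole difficulty. Your own bound $e(G')\le e(G'[J])+\sum_{v\in I}d(v)$ together with the pairwise closure inequalities on $I$ only yields $\sum_{v\in I}d(v)\le \tfrac{s(n+k-2)}{2}$, which is linear in $n$ and far exceeds $s(t+1)$; nothing you have written forces a drop of the right order in $\sum_{v\in I}d(v)$ when edges are removed from $G'[J]$. Indeed, choosing $I$ to be an arbitrary independent $s$-set (not a maximum one) gives no control on $G'[J]$ whatsoever---$\alpha(G')$ could be much larger than $s$. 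So the ``compensating drops'' heuristic is unsupported, and without it the plan does not close.

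The paper avoids this trap by reversing the order of the argument: it first proves directly (its Lemma~\ref{le8}) that the edge hypothesis forces $\omega(H)\ge n+k-t-1$, and only then invokes Lemma~\ref{le1} to pin $\omega(H)=n+k-t-1$. The key device you are missing is the auxiliary clique $\widetilde{C}=\{v:d_H(v)\ge (n+k-1)/2\}$ of high-degree vertices, which is automatically a clique in an $(n+k-1)$-closed graph. When the maximum clique is small (at most $(n-k+5)/3+t$), every vertex outside $\widetilde{C}$ has degree at most $(n+k-2)/2$, and this yields a clean upper bound on $e(H)$ that contradicts the hypothesis; when the maximum clique is moderate (between $(n-k+5)/3+t$ and $n+k-t-2$), vertices outside it have degree at most $n+k-r-1$, and a convexity argument in $r$ again gives the contradiction. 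This is where the threshold $n\ge (15t-11k+29)/2$ actually enters. Once the big clique is in hand, your Case~2 analysis (with $v_s$ absorbed into the clique) is exactly how the structure $K_t\vee(K_{n+k-2t-1}+(t-k+1)K_1)$ falls out.
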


Based on this edge condition, we obtain the following spectral radius condition.

\begin{theorem}\label{th5}
Let $G$ be a $t$-connected graph of order $n\geq\max\{\frac{15t-11k+29}{2}, t^{2}+\frac{5}{2}t+2\}$, where $t$ and $k$ are integers with $t\geq k\geq1$ and $n\equiv k~(\rm mod~2)$. If
$$\rho(G)\geq \rho(K_{t}\vee (K_{n+k-2t-1}+(t-k+1)K_{1})),$$
then $G$ is $k$-factor-critical unless $G\cong K_{t}\vee (K_{n+k-2t-1}+(t-k+1)K_{1})$.
\end{theorem}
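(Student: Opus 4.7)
My plan is to reduce Theorem~\ref{th5} to the edge-count version, Theorem~\ref{th4}, via spectral monotonicity and Hong's inequality $\rho(G)^{2}\leq 2e(G)-n+1$. Write $H:=K_{t}\vee(K_{n+k-2t-1}+(t-k+1)K_{1})$ and suppose for contradiction that $G$ satisfies $\rho(G)\geq\rho(H)$ but is not $k$-factor-critical and $G\not\cong H$. Since $t\geq 1$, $G$ is connected, so $A(G)$ is irreducible. The hypothesis $n\geq(15t-11k+29)/2$ allows me to apply Theorem~\ref{th4}, which yields two cases: either (i) $\mathcal{C}_{n+k-1}(G)\cong H$, or (ii) $e(G)\leq e_{0}:=\binom{n+k-t-2}{2}+(t-k+2)(t+1)$.

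Case (i) is immediate: the closure is obtained from $G$ only by adding edges, so $G$ is a spanning subgraph of $H$, and the strict Perron--Frobenius monotonicity (if $A\leq A'$ entrywise, $A$ is irreducible, and $A\neq A'$, then $\rho(A)<\rho(A')$) combined with $\rho(G)\geq\rho(H)$ forces $G\cong H$, contradicting our assumption.

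In case (ii), I combine Hong's inequality with the edge bound to obtain
$$\rho(G)^{2}\leq (n+k-t-2)(n+k-t-3)+2(t-k+2)(t+1)-n+1.$$
A short algebraic simplification shows the right-hand side is at most $(n+k-t-2)^{2}$ precisely when $2n\geq 2(t-k+2)(t+1)+t-k+3$, and in its tightest instance $k=1$ this reduces to $n\geq t^{2}+\tfrac{5}{2}t+2$, which holds by hypothesis. On the other hand, because $t-k+1\geq 1$, the clique $K_{n+k-t-1}\cong K_{t}\vee K_{n+k-2t-1}$ is a proper induced subgraph of $H$ (the remaining $t-k+1$ vertices from the disjoint-union summand lie outside this clique but are connected to the $K_{t}$ part), so the same Perron--Frobenius principle yields $\rho(H)>\rho(K_{n+k-t-1})=n+k-t-2$. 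Combining, $\rho(G)\leq n+k-t-2<\rho(H)$, contradicting $\rho(G)\geq\rho(H)$.

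The main obstacle is case (ii): the inequality $(n+k-t-2)^{2}\geq 2e_{0}-n+1$ is exactly tight at $k=1$, $n=t^{2}+\tfrac{5}{2}t+2$, so all of the slack needed for a strict contradiction must come from the strict bound $\rho(H)>n+k-t-2$, which in turn rests on $H$ properly containing $K_{n+k-t-1}$ (guaranteed by $t\geq k$). Beyond this, the remaining work is just verifying the algebra and the irreducibility needed for strict spectral monotonicity in both directions.
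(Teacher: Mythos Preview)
Your proposal is correct and uses exactly the same ingredients as the paper's proof: Hong's inequality, strict Perron--Frobenius monotonicity, the fact that $K_{n+k-t-1}$ is a proper subgraph of $H$, and the reduction to Theorem~\ref{th4}. The only cosmetic difference is that the paper argues linearly (from $\rho(G)\geq\rho(H)>n+k-t-2$ it derives via Hong that $e(G)>e_{0}$, then invokes Theorem~\ref{th4} to force $\mathcal{C}_{n+k-1}(G)\cong H$ and hence $G\cong H$), whereas you phrase the same logic as a two-case contrapositive of Theorem~\ref{th4}; the algebra and the critical bound $n\geq t^{2}+\tfrac{5}{2}t+2$ at $k=1$ are identical.
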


In the next section, we introduce notations. In Sections \ref{se3} and \ref{se4}, we prove Theorems \ref{th4} and \ref{th5}, respectively.
\section{Preliminaries}
\label{sec:ch6-introduction}
Throughout this paper, we only consider simple and undirected graphs.
Let $G=(V(G), E(G))$ be a simple graph with vertex set $V(G)=\{v_{1}, v_{2},
\ldots, v_{n}\}$ and edge set $E(G)=\{e_{1},e_{2}, \ldots, e_{m}\}$. We use $|V(G)|=n$ and $e(G)$ to denote the number of vertices and edges of $G$, respectively. The degree $d_{G}(v)$ of a vertex $v$ in $G$ is the number of edges of $G$ incident with $v$. And the neighborhood $N_{G}(v)$ is the set of vertices of $G$ adjacent to $v$. For a subset $S \subseteq V(G)$ and a vertex $v \in V(G)$, $N_{S}(v)$ is the set of neighbors of $v$ in $S$, i.e., $N_S(v)=N_{G}(v)\cap S$ and $d_S(v)=|N_S(v)|$, that is, $d_S(v)$ is the number of edges between $v$ and the subset $S$ in $G$. For convenience, we use $\delta=\delta(G)$ to denote the minimum degree of $G$. A connected graph $G$ is called $t$-connected if it has more than $t$ vertices and remains connected whenever fewer than $t$ vertices are removed. For two disjoint vertex subsets $V_{1}$ and $V_{2}$ of $G$, we denote $e(V_{1}, V_{2})$ to be the number of edges each of which has one vertex in $V_{1}$ and the other vertex in $V_{2}$. Let $G[S]$ be the induced subgraph of $G$ whose vertex set is $S$ and whose edge set consists of all edges of $G$ which have both endpoints in $S$. A clique (resp., independent set) of $G$ is a subset $S$ of $V$ such that $G[S]$ is a complete (resp., empty) graph. The clique number (resp., independent number) of $G$, denoted by $\omega(G)$ (resp., $\alpha(G)$), is the number of vertices in a maximum clique (resp., independent set) of $G$. For two vertex-disjoint graphs $G$ and $H$, the disjoint union of $G$ and $H$, denoted by $G+H$, is the graph with vertex set $V(G)\cup V(H)$ and edge set $E(G)\cup E(H)$. In particular, let $tG$ be the disjoint union of $t$ copies of graph $G$. The join graph $G\vee H$ is obtained from $G+H$ by adding all possible edges between $V(G)$ and $V(H)$. We use $K_{n}$ to denote a complete graph of order $n$. For undefined terms and notions one can refer to \cite{Bondy2008, Brouwer2011, Godsil2001}.

The adjacency matrix $A(G)$ of a graph $G$ is an $n\times n$
matrix with $a_{ij}=1$ if $v_{i}$ and $v_{j}$ of $G$ are adjacent, and $a_{ij}=0$ otherwise. The largest eigenvalue of $A(G)$, denoted by $\rho(G)$, is called the spectral radius of $G$.

Since Danish mathematicians Petersen first attempted the study of factors in 1891, the graph factors theory plays an important role in graph theory. The definition of a $(g, f)$-factor of $G$ is a spanning subgraph $F$ of $G$ satisfying  $g(v)\leq d_{F}(v)\leq f(v)$ for any vertex $v$ in $V(G)$, where $g$ and $f$ are two integer-valued functions defined on $V(G)$ such that $0\leq g(v)\leq f(v)$ for each vertex $v$ in $V(G)$. A spanning subgraph of a graph is its subgraph whose vertex set is same as the original graph. Let $a$ and $b$ be two positive integers with $a\leq b$. A $(g, f)$-factor is called an $[a, b]$-factor if $g(v)\equiv a$ and $f(v)\equiv b$ for any $v\in V(G)$. In special, for a positive integer $k$, an $[a, b]$-factor is called a $k$-factor if $a=b=k$. When $k=1$, $1$-factor is also called a perfect matching, where a perfect matching is a set of nonadjacent edges covering every vertex of $G$.
\section{The edge condition}\label{se3}
Before presenting our main result, we first show that an $(n+k-1)$-closure of an non-$k$-factor-critical $t$-connected graph $G$ of order $n$ must contain a large clique if its number of edges is large enough. And we begin with the following Lemma \ref{le8}.

\begin{lemma}\label{le8}
Let $H$ be a $t$-connected and $(n+k-1)$-closure graph of order $n\geq\frac{15t-11k+29}{2}$, where $t$ and $k$ are integers with $t\geq k\geq1$ and $n\equiv k~(\rm mod~2)$. If
$$e(H)>{n+k-t-2\choose 2}+(t-k+2)(t+1),$$
then $\omega(H)\geq n+k-t-1$.
\end{lemma}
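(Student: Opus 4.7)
The plan is to argue by contradiction: assume $\omega(H) \leq n+k-t-2$ and show $e(H) \leq \binom{n+k-t-2}{2} + (t-k+2)(t+1)$, contradicting the edge hypothesis. The strategy is to produce two complementary upper bounds on $e(H)$ as functions of $s := \omega(H)$ whose union is tight enough to cover the whole range $s \in [0,\, n+k-t-2]$.

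For the first bound I would fix a maximum clique $C$ of size $s$ and set $W = V(H) \setminus C$. For each $w \in W$, maximality of $C$ yields a non-neighbor $v_w \in C$; since $v_w$ lies in a clique of size $s$, $d_H(v_w) \geq s-1$, and the $(n+k-1)$-closure property forces $d_H(w) \leq n+k-1-s$. Summing over $W$ and using $\sum_{w\in W} d_H(w) = e(C,W) + 2e(W)$ gives
$$
e(H) \leq \binom{s}{2} + (n-s)(n+k-1-s) =: f(s),
$$
and $f(n+k-t-2)$ equals the target value exactly.

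For the second bound I would exploit the fact that $A := \{v \in V(H) : d_H(v) \geq (n+k)/2\}$ is a clique: two non-adjacent vertices there would have degree sum at least $n+k$, violating the closure inequality. Because $n \equiv k \pmod{2}$, every vertex outside $A$ has integer degree at most $(n+k-2)/2$; together with $|A| \leq s$, a double count of degrees yields
$$
2e(H) \leq s(n-1) + (n-s)\frac{n+k-2}{2} =: g(s).
$$
Since $f$ is a convex quadratic and the second root of $f(s) = f(n+k-t-2)$ is $s_\ast = (n-k+3t+5)/3$, we have $f(s) \leq f(n+k-t-2)$ throughout $[s_\ast,\, n+k-t-2]$. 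The linear function $g$ is increasing, so $g(s)/2 \leq f(n+k-t-2)$ on an initial segment $[0,\,\omega_0]$, where $\omega_0$ is obtained by solving $g(\omega_0) = 2 f(n+k-t-2)$.

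The main obstacle, and the origin of the constant in the size hypothesis, is the algebraic identity that $n \geq (15t-11k+29)/2$ is exactly what guarantees $s_\ast \leq \omega_0$, so that $[0,\,\omega_0] \cup [s_\ast,\, n+k-t-2] = [0,\, n+k-t-2]$. After cross-multiplying, this reduces to the non-negativity of a quadratic in $n$ with leading coefficient $2$ and linear coefficient $11k - 15t - 29$, which follows routinely from the hypothesis. Once verified, $e(H) \leq \min\{f(s),\, g(s)/2\} \leq \binom{n+k-t-2}{2} + (t-k+2)(t+1)$ for every $s \leq n+k-t-2$, contradicting the hypothesis and forcing $\omega(H) \geq n+k-t-1$.
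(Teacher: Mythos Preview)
Your argument is correct and is essentially the paper's own proof, only packaged differently: the paper splits into Case~1 ($r\le \tfrac{n-k+5}{3}+t$) and Case~2, using in Case~2 exactly your bound $f(s)=\binom{s}{2}+(n-s)(n+k-1-s)$ and in Case~1 exactly your degree-sum bound via the ``large-degree'' clique $\widetilde C=\{v:d_H(v)\ge\tfrac{n+k-1}{2}\}=A$; both bounds simplify to the same linear function $\tfrac{n-k}{4}s+\tfrac{n^2+kn-2n}{4}$, and the breakpoint $s_\ast=\tfrac{n-k+3t+5}{3}$ coincides with the paper's case split. The one place where ``follows routinely'' hides real work is the constant term of your quadratic $2n^2+(11k-15t-29)n+C$: showing $C=5k^2-21kt+18t^2-37k+66t+60\ge 0$ genuinely uses $t\ge k\ge 1$ (the paper completes the square as $(3k-\tfrac72 t)^2+\tfrac74 t^2+4(t^2-k^2)+37(t-k)+29t+60$), so you should record that step rather than wave at it.
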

\begin{proof}
Since $H$ is a $t$-connected and $(n+k-1)$-closure graph of order $n\geq\frac{15t-11k+29}{2}$ (where $t$ and $k$ are integers with $t\geq k\geq 1$), any two vertices of degree at least $\frac{n+k-1}{2}$ must be adjacent in $H$. Let $C$ be the vertex set of a maximum clique in $H$, and let $\widetilde{C}$ be the vertex set in $H$ containing all vertices of degree at least $\frac{n+k-1}{2}$, that is, $\widetilde{C}=\{v\in V(H): d_{H}(v)\geq\frac{n+k-1}{2}\}$. Since $H$ is a $(n+k-1)$-closure, $\widetilde{C}$ is a clique in $H$. Let $H^{'}$ be the subgraph of $H$ induced by $V(H)\setminus C$ and $\widetilde{H}^{'}$ be the subgraph of $H$ induced by $V(H)\setminus \widetilde{C}$, where $|C|=r$, $|\widetilde{C}|=\widetilde{r}$, $|V(H^{'})|=n-r$ and $|V(\widetilde{H}^{'})|=n-\widetilde{r}$. Notice that $r\geq\widetilde{r}$.\

Note that $e(H[\widetilde{C}])={\widetilde{r}\choose 2}$, $e(V(\widetilde{H}^{'}), \widetilde{C})=\sum_{u\in V(\widetilde{H}^{'})}d_{\widetilde{C}}(u)$ and
$$e(\widetilde{H}^{'})=\frac{1}{2}\left(\sum_{u\in V(\widetilde{H}^{'})}d_{H}(u)-\sum_{u\in V(\widetilde{H}^{'})}d_{\widetilde{C}}(u)\right).$$
Next, we consider the following two cases on the order of $C$.

\vspace{1.5mm}
\noindent\textbf{Case 1.} $1\leq r\leq\frac{n-k+5}{3}+t$.
\vspace{1mm}\\
In this case, we have Claim \ref{cl1}.

\begin{claim}\label{cl1}
{\rm $d_{\widetilde{C}}(u)\leq \widetilde{r}$ and $d_{H}(u)\leq\frac{n+k-2}{2}$ for each $u\in V(\widetilde{H}^{'})$.}
\end{claim}

\begin{proof}
Due to $|\widetilde{C}|=\widetilde{r}$, we have $d_{\widetilde{C}}(u)\leq\widetilde{r}$ for each $u\in V(\widetilde{H}^{'})$. Note that
$$\widetilde{C}=\{v\in V(H): d_{H}(v)\geq\frac{n+k-1}{2}\}$$
and
$$V(\widetilde{H}^{'})=V(H)\setminus \widetilde{C}.$$
Thus, $d_{H}(u)\leq\frac{n+k-2}{2}$ for each $u\in V(\widetilde{H}^{'})$.
\end{proof}
By Claim \ref{cl1}, we obtain
\begin{align*}
e(H)=&e(H[\widetilde{C}])+e(\widetilde{H}^{'})+e(V(\widetilde{H}^{'}), \widetilde{C})\\
=&{\widetilde{r}\choose 2}+\frac{1}{2}\left(\sum_{u\in V(\widetilde{H}^{'})}d_{H}(u)-\sum_{u\in V(\widetilde{H}^{'})}d_{\widetilde{C}}(u)\right)+\sum_{u\in V(\widetilde{H}^{'})}d_{\widetilde{C}}(u)\\
=&{\widetilde{r}\choose 2}+\frac{1}{2}\left(\sum_{u\in V(\widetilde{H}^{'})}d_{H}(u)+\sum_{u\in V(\widetilde{H}^{'})}d_{\widetilde{C}}(u)\right)\\
\leq&{\widetilde{r}\choose 2}+\frac{1}{2}\left((n-\widetilde{r})\cdot\frac{n+k-2}{2}+(n-\widetilde{r})\cdot \widetilde{r}\right)\\
=&\frac{n-k}{4}\cdot \widetilde{r}+\frac{n^{2}+kn}{4}-\frac{n}{2}\\
\leq&\frac{n-k}{4}\cdot r+\frac{n^{2}+kn}{4}-\frac{n}{2}~~(\text {since}~\widetilde{r}\leq r)\\
\leq&\frac{n-k}{4}\cdot \left(\frac{n-k+5}{3}+t\right)+\frac{n^{2}+kn}{4}-\frac{n}{2}~~(\text {since}~r\leq\frac{n-k+5}{3}+t)\\
=&\frac{4n^{2}+(k+3t-1)n+k^{2}-5k-3kt}{12}\\
=&{n+k-t-2\choose 2}+(t-k+2)(t+1)-\frac{2n^{2}+(11k-15t-29)n}{12}\\
&-\frac{5k^{2}-21kt-37k+18t^{2}+66t+60}{12}\\
=&{n+k-t-2\choose 2}+(t-k+2)(t+1)-\frac{n(2n+11k-15t-29)}{12}\\
&-\frac{9k^{2}-21kt+\frac{49}{4}t^{2}+\frac{7}{4}t^{2}+4t^{2}-4k^{2}+66t-37k+60}{12}\\
=&{n+k-t-2\choose 2}+(t-k+2)(t+1)-\frac{n(2n+11k-15t-29)}{12}\\
&-\frac{(3k-\frac{7}{2}t)^{2}+\frac{7}{4}t^{2}+4(t^{2}-k^{2})+37(t-k)+29t+60}{12}~~(\text {since}~t\geq k\geq 1)\\
\leq&{n+k-t-2\choose 2}+(t-k+2)(t+1)-\frac{n(2n-15t+11k-29)}{12}\\
<&e(H)~~(\text {since}~n\geq\frac{15t-11k+29}{2}),
\end{align*}
which is a contradiction.

\vspace{1.5mm}
\noindent\textbf{Case 2.} $\frac{n-k+5}{3}+t<r\leq n+k-t-2$.
\vspace{1mm}
\begin{claim}\label{cl2}
{\rm For each $u\in V(H^{'})$, we have $d_{H}(u)\leq n+k-r-1$.}
\end{claim}
\begin{proof}
Assume that there exists a vertex $u\in V(H^{'})$ such that $d_{H}(u)\geq n+k-r$. Then $d_{H}(u)+d_{H}(v)\geq (n+k-r)+(r-1)=n+k-1$ for each $v\in C$. Note that $H$ is an $(n+k-1)$-closure graph. Then the vertex $u$ is adjacent to every vertex of $C$, hence, $C\cup\{u\}$ is a larger clique, which contradicts the maximality of $|C|$.
\end{proof}

By Claim \ref{cl2}, we obtain
\begin{align*}
e(H)=&e(H[C])+e(H^{'})+e(V(H^{'}), C)\\
=&{r\choose 2}+\frac{1}{2}\left(\sum_{u\in V(H^{'})}d_{H}(u)-\sum_{u\in V(H^{'})}d_{C}(u)\right)+\sum_{u\in V(H^{'})}d_{C}(u)\\
=&{r\choose 2}+\frac{1}{2}\left(\sum_{u\in V(H^{'})}d_{H}(u)+\sum_{u\in V(H^{'})}d_{C}(u)\right)\\
\leq&{r\choose 2}+\sum_{u\in V(H^{'})}d_{H}(u)\\
\leq&{r\choose 2}+(n-r)(n+k-r-1)\\
=&\frac{3}{2}r^{2}-\left(2n+k-\frac{1}{2}\right)r+n^{2}+kn-n.
\end{align*}
Let $f(r)=\frac{3}{2}r^{2}-\left(2n+k-\frac{1}{2}\right)r+n^{2}+kn-n$. Obviously, $f(r)$ is a convex function on $r$. Note that $\frac{n-k+5}{3}+t<r\leq n+k-t-2$.
\begin{align*}
e(H)\leq&\max\left\{f\left(\frac{n-k+5}{3}+t\right), f(n+k-t-2)\right\}\\
=&{n+k-t-2\choose 2}+(t-k+2)(t+1)
\end{align*}
for $n\geq\frac{15t-11k+29}{2}$, a contradiction.\

By Cases 1 and 2, we know that $\omega(H)=r\geq n+k-t-1$. The proof is completed.
\end{proof}

\medskip
\noindent \textbf{Proof of Theorem \ref{th4}.}
Suppose that $G$ is not a $k$-factor-critical graph, and $n\geq\frac{15t-11k+29}{2}$, $t\geq k\geq1$. Let $H=\mathcal{C}_{n+k-1}(G)$. It suffices to prove that $H\cong K_{t}\vee (K_{n+k-2t-1}+(t-k+1)K_{1})$.\

By Lemma \ref{le2}, $H$ is not $k$-factor-critical. As $G$ is $t$-connected, $H$ is also $t$-connected. Notice that $e(G)>{n+k-t-2\choose 2}+(t-k+2)(t+1)$ and $G\subseteq H$. Then
$$e(H)>{n+k-t-2\choose 2}+(t-k+2)(t+1).$$
By Lemma \ref{le8}, we have $\omega(H)\geq n+k-t-1$. Let $C$ be a maximum clique of $H$, and let $H^{'}$ be the subgraph of $H$ induced by $V(H)\setminus C$, where $|C|=r$ and $|V(H^{'})|=n-r$.\

Next, we characterize the structure of $H$. First we prove the following claims.

\begin{claim}\label{cl3}
{\rm $\omega(H)=|C|=n+k-t-1$.}
\end{claim}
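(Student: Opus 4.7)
The plan is to derive both bounds on $\omega(H)$. Lemma~\ref{le8} already yields $\omega(H) \geq n+k-t-1$, so only the upper bound $\omega(H) \leq n+k-t-1$ remains. I will argue this by contradiction, supposing $r := |C| \geq n+k-t$.

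The essential tool is a slight strengthening of Claim~\ref{cl2}: the inequality $d_{H}(u) \leq n+k-r-1$ holds for every $u \in V(H')$, regardless of which case of Lemma~\ref{le8} one is in. Indeed, the proof of Claim~\ref{cl2} invokes only the maximality of $C$ and the fact that $H$ equals its own $(n+k-1)$-closure: if some $u \notin C$ had $d_{H}(u) \geq n+k-r$, then for each $v \in C$ we would have $d_{H}(u) + d_{H}(v) \geq (n+k-r) + (r-1) = n+k-1$, forcing $u$ to be adjacent to every vertex of $C$ in the closure, hence producing a clique of size $r+1$, a contradiction. So the bound transfers to the present setting without change.

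With this in hand, I split into two subcases according to whether $V(H')$ is empty. If $V(H') = \emptyset$, then $r = n$ and $H \cong K_{n}$; since $n \equiv k \pmod{2}$ and $n \geq k+2$ (an easy consequence of the hypothesis $n \geq \frac{15t-11k+29}{2}$), $K_{n}$ is plainly $k$-factor-critical, which by Lemma~\ref{le2} contradicts the assumption that $G$ is not $k$-factor-critical. If $V(H') \neq \emptyset$, I pick any $u \in V(H')$ and combine the degree bound with $r \geq n+k-t$ to obtain $d_{H}(u) \leq n+k-r-1 \leq t-1$, contradicting $\delta(H) \geq t$, which is forced by the $t$-connectedness of $H$.

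I do not anticipate a serious obstacle: the heavy lifting was already packaged into Lemma~\ref{le8} and the closure-and-maximality argument of Claim~\ref{cl2}. The only mildly delicate points are recognising that the degree bound of Claim~\ref{cl2} is independent of the case split in Lemma~\ref{le8}, and peeling off the boundary subcase $r = n$ separately via the trivial $k$-factor-criticality of the complete graph.
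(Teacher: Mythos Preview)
Your argument is correct but takes a different route from the paper. The paper derives the upper bound $\omega(H)\le n+k-t-1$ in one stroke from Favaron's Lemma~\ref{le1}: since $H$ is $t$-connected with $n\equiv k\pmod 2$ yet not $k$-factor-critical, the contrapositive of Lemma~\ref{le1} forces $\alpha(H)\ge t-k+2$, and then the elementary inequality $\omega(H)+\alpha(H)\le n+1$ (a clique and an independent set share at most one vertex) finishes immediately. You instead recycle the closure-and-maximality degree bound of Claim~\ref{cl2} together with the minimum-degree consequence of $t$-connectedness, handling the boundary case $H\cong K_n$ separately. The paper's proof is shorter and leverages the already-cited Lemma~\ref{le1}; yours is more self-contained in that it avoids invoking Favaron's theorem, but at the cost of a two-case split and re-deriving the degree bound outside the case structure of Lemma~\ref{le8}.
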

\begin{proof}
Since $H$ is not a $k$-factor critical graph, by Lemma \ref{le1}, we have $\alpha(H)\geq t-k+2$. Hence, $\omega(H)\leq n+k-t-1$. By Lemma \ref{le8}, we have $\omega(H)\geq n+k-t-1$, we obtain that $\omega(H)=|C|=r=n+k-t-1$.
\end{proof}
According to Claim \ref{cl3}, the vertex set of $H$ can be partitioned as $V(H)=C\cup V(H^{'})$, where $C$ is a clique of size $n+k-t-1$. In the following, we need to consider the edge connection method between $C$ and $V(H^{'})$. Let $V(C)=\{u_{1}, u_{2}, \ldots, u_{n+k-t-1}\}$ and $V(H^{'})=\{v_{1}, v_{2}, \ldots, v_{t-k+1}\}$.
\begin{claim}\label{cl4}
{\rm $d_{H}(v_{i})=t$ for each $v_{i}\in V(H^{'})$.}
\end{claim}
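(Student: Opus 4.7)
The plan is to sandwich $d_H(v_i)$ between $t$ and $t$, using the two structural properties we have available: $t$-connectivity of $H$ (which gives the lower bound) and the $(n+k-1)$-closure property combined with the maximality of the clique $C$ (which gives the upper bound).

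For the lower bound, since $H$ is $t$-connected we have $\delta(H) \geq t$, so immediately $d_H(v_i) \geq t$ for every $v_i \in V(H')$.

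For the upper bound, the key observation is that every $u \in C$ satisfies $d_H(u) \geq |C|-1 = n+k-t-2$, because $u$ is already adjacent to all other vertices of the clique $C$. I would fix an arbitrary $v_i \in V(H')$ and split on whether $v_i$ is adjacent to all of $C$. If $v_i$ were adjacent to every vertex of $C$, then $C \cup \{v_i\}$ would be a clique of size $n+k-t$, contradicting Claim \ref{cl3} (maximality of $|C|$). Therefore there exists $u \in C$ with $u v_i \notin E(H)$. Since $H$ is an $(n+k-1)$-closure, the defining inequality gives
$$d_H(u) + d_H(v_i) \leq n+k-2,$$
and combining with $d_H(u) \geq n+k-t-2$ yields $d_H(v_i) \leq t$.

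I do not expect a real obstacle here; the only thing to be careful about is making sure that the non-neighbor $u \in C$ actually exists (handled by the maximality argument above) and that the closure inequality is applied in the correct direction (it is an upper bound on degree sums of non-adjacent pairs in the closure). Combining the two bounds yields $d_H(v_i) = t$, as claimed.
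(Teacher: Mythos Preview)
Your proof is correct and is essentially the same as the paper's: both use $t$-connectivity for the lower bound and the $(n+k-1)$-closure property together with the maximality of $C$ for the upper bound. The only cosmetic difference is that the paper argues the upper bound by contradiction (assuming $d_H(v_i)\geq t+1$ forces $v_i$ adjacent to all of $C$), while you take the contrapositive route via a non-neighbor $u\in C$; the content is identical.
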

\begin{proof}
Recall that $H$ is $t$-connected, then for every vertex $v_{i}\in V(H^{'})$, we always have $d_{H}(v_{i})\geq t$. Suppose that there exists a vertex $v_{i}\in V(H^{'})$ with $d_{H}(v_{i})\geq t+1$. Then
$$d_{H}(u_{i})+d_{H}(v_{i})\geq (n+k-t-2)+(t+1)=n+k-1$$
for each $u_{i}\in V(C)$. Note that $H$ is $(n+k-1)$-closure. Then the vertex $v_{i}$ is adjacent to every vertex of $C$. This implies that $C\cup\{v_{i}\}$ is a larger clique, a contradiction. Hence, $d_{H}(v_{i})=t$ for every $v_{i}\in V(H^{'})$.
\end{proof}
\begin{claim}\label{cl6}
{\rm $N_{H}(v_{i})\cap C=N_{H}(v_{j})\cap C$, where $i\neq j$.}
\end{claim}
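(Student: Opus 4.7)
The plan is to argue by contradiction, using only the $(n+k-1)$-closure property together with the clique size pinned down in Claim \ref{cl3} and the degree bound from Claim \ref{cl4}. Suppose, for a pair of distinct indices $i\neq j$, that $N_{H}(v_{i})\cap C\neq N_{H}(v_{j})\cap C$. By symmetry in $i$ and $j$, I may relabel so that there is a vertex $u\in C$ with $uv_{i}\in E(H)$ but $uv_{j}\notin E(H)$.

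Next I would exploit the fact that $H$ is its own $(n+k-1)$-closure: every nonadjacent pair of vertices $x,y$ in $H$ satisfies $d_{H}(x)+d_{H}(y)\leq n+k-2$. Applied to the nonadjacent pair $\{u,v_{j}\}$, and combined with $d_{H}(v_{j})=t$ from Claim \ref{cl4}, this gives the upper bound
$$d_{H}(u)\leq n+k-2-t=n+k-t-2.$$

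Finally, I would derive a contradiction from the lower bound on $d_H(u)$ implied by $u\in C$. Since $|C|=n+k-t-1$ by Claim \ref{cl3}, the vertex $u$ already has $n+k-t-2$ neighbors inside $C$, so the bound above forces $u$ to have no neighbors outside $C$. But $v_{i}\in V(H')=V(H)\setminus C$ is a neighbor of $u$, which is the desired contradiction. Hence $N_{H}(v_{i})\cap C=N_{H}(v_{j})\cap C$ for all $i\neq j$.

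I do not anticipate any real obstacle here: the entire argument is a short consequence of Claims \ref{cl3} and \ref{cl4} together with the defining property of the $(n+k-1)$-closure, and the only care needed is in choosing $u$ on the correct side of the symmetric difference of the two neighborhoods.
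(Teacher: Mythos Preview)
Your proof is correct and is essentially the same as the paper's: both rely on the observation that any $u\in C$ adjacent to some $v_i\in V(H')$ has $d_H(u)\geq (n+k-t-2)+1$, which together with $d_H(v_j)\geq t$ and the $(n+k-1)$-closure property forces $uv_j\in E(H)$. The only cosmetic difference is that you phrase this via the contrapositive (nonadjacency implies $d_H(u)+d_H(v_j)\leq n+k-2$) and derive a contradiction, whereas the paper argues the inclusion $N_H(v_i)\cap C\subseteq N_H(v_j)\cap C$ directly.
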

\begin{proof}
For any $u_{i}\in N_{H}(v_{i})\cap C$ and $v_{j}\in V(H^{'})$, we have
$$d_{H}(u_{i})+d_{H}(v_{j})\geq (n+k-t-2)+1+t=n+k-1,$$
where $i\neq j$. Notice that $H$ is an $(n+k-1)$-closure graph. Then $u_{i}$ is adjacent to every vertex of $H^{'}$. It follows that $u_{i}\in N_{H}(v_{j})\cap C$, then $N_{H}(v_{i})\cap C\subseteq N_{H}(v_{j})\cap C$. Similarly, we can obtain that $N_{H}(v_{i})\cap C\supseteq N_{H}(v_{j})\cap C$. Then $N_{H}(v_{i})\cap C=N_{H}(v_{j})\cap C$, where $i\neq j$.
\end{proof}
\begin{claim}\label{cl5}
{\rm $V(H^{'})$ is an independent set.}
\end{claim}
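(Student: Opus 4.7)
The plan is to derive Claim \ref{cl5} directly from the lower bound $\alpha(H)\geq t-k+2$ that is already recorded in the proof of Claim \ref{cl3}, combined with the arithmetic identity $|V(H')|=t-k+1$; no new closure argument or clique-extension step should be required. The intuition I would exploit is that the independence number just barely exceeds $|V(H')|$, and since $C$ is a clique any independent set can absorb at most one vertex of $C$, which will force all of $V(H')$ into any sufficiently large independent set.

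First I would invoke Lemma \ref{le1}: because $H$ is $t$-connected, satisfies $n\equiv k\pmod 2$, and is not $k$-factor-critical (by Lemma \ref{le2} applied to $G$), one has $\alpha(H)\geq t-k+2$. Next I would pick an independent set $I\subseteq V(H)$ with $|I|\geq t-k+2$. Exploiting that $C$ is a clique gives $|I\cap C|\leq 1$, and combined with the decomposition $V(H)=C\cup V(H')$ together with $|V(H')|=n-(n+k-t-1)=t-k+1$ this would yield
\[
  |I\cap V(H')|\;\geq\;|I|-|I\cap C|\;\geq\;(t-k+2)-1\;=\;t-k+1\;=\;|V(H')|.
\]
Hence $V(H')\subseteq I$, and since $I$ is independent in $H$, so is $V(H')$.

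The main (and essentially the only) obstacle will be spotting the right pigeonhole: $\alpha(H)$ exceeds $|V(H')|$ by exactly the single-vertex slack that a clique can contribute to an independent set. Once that observation is made, the proof collapses to a single line. It is worth emphasising that neither Claim \ref{cl4} nor Claim \ref{cl6} is actually needed at this step; only the $\alpha$-bound from Lemma \ref{le1} and the count $|V(H')|=t-k+1$ produced by the maximality of $C$ in Claim \ref{cl3} are used. If the authors instead take a longer route (for example, assuming $v_iv_j\in E(H)$ and trying to build a larger clique from $\{v_i,v_j\}\cup N$), I would expect it to run aground because the natural clique one constructs has size $s+2$, which for $s\leq t$ is far below $|C|=n+k-t-1$ under the hypothesis $n\geq\tfrac{15t-11k+29}{2}$; the pigeonhole approach therefore looks like the cleanest route.
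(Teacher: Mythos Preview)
Your proof is correct and is genuinely different from the paper's argument. The paper proves Claim~\ref{cl5} by a connectivity--degree route: it first uses Claim~\ref{cl4} to bound $|N_H(v_i)\cap C|\leq t$, then argues (implicitly relying on Claim~\ref{cl6}, which is established just before) that if some $v_i$ had $|N_H(v_i)\cap C|\leq t-1$ then removing this common neighbourhood in $C$ would disconnect $V(H')$ from the rest of $C$, contradicting $t$-connectivity; hence $d_C(v_i)=t=d_H(v_i)$ and $V(H')$ carries no internal edges. Your pigeonhole argument bypasses both Claims~\ref{cl4} and~\ref{cl6} entirely: it recycles the bound $\alpha(H)\geq t-k+2$ already obtained in Claim~\ref{cl3} and the clique structure of $C$ to force $V(H')\subseteq I$ for any maximum independent set $I$. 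This is shorter and conceptually cleaner, and it shows that Claim~\ref{cl5} is an immediate consequence of Claim~\ref{cl3} alone. (Your closing speculation that the authors might attempt a clique-extension via $\{v_i,v_j\}\cup N$ is not what they do; their actual route is the connectivity argument above.)
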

\begin{proof}

Recall that $H$ is $t$-connected. By Claim \ref{cl4}, we get $1\leq|N_{H}(v_{i})\cap C|\leq t$ for every $v_{i}\in V(H^{'})$. By Claim \ref{cl3}, the vertex set of $H$ partitioned into two parts $C$ and $V(H^{'})$. In fact, if $1\leq|N_{H}(v_{i})\cap C|\leq t-1$, then $H[V(H)\setminus (N_{H}(v_{i})\cap C)]$ is disconnected, which contradicts the $t$-connectivity of $H$. Then $d_{C}(v_{i})=|N_{H}(v_{i})\cap C|=t$ for every $v_{i}\in V(H^{'})$. Combining with Claim \ref{cl3}, we know that $V(H^{'})$ is an independent set.
\end{proof}
By the above claims, we know that $H\cong K_{t}\vee (K_{n+k-2t-1}+(t-k+1)K_{1})$. Note that $K_{t}\vee (K_{n+k-2t-1}+(t-k+1)K_{1})$ is $t$-connected. By the definition of $k$-factor-critical graph, if we remove $k$ vertices of $K_{t}$~($t\geq k\geq 1$), then the vertices of $(t-k+1)K_{1}$ are only adjacent to the vertices of $K_{t-k}$. Thus, we cannot find a perfect matching in $K_{t-k}\vee (K_{n+k-2t-1}+(t-k+1)K_{1})$, and thus, the graph $G$ is not $k$-factor-critical.\\
And we have
\begin{align*}
e(K_{t}\vee (K_{n+k-2t-1}+(t-k+1)K_{1}))=&{n+k-t-1\choose 2}+t(t-k+1)\\
>&{n+k-t-2\choose 2}+(t-k+2)(t+1).
\end{align*}
This completes the proof.
\hspace*{\fill}$\Box$

\section{The spectral radius condition}\label{se4}
Let $A$ and $B$ be two $n\times n$ matrices. Define $A\leq B$ if $a_{ij}\leq b_{ij}$ for all $i$ and $j$, and define $A<B$ if $A\leq B$ and $A\neq B$.

\begin{lemma} {\normalfont(\cite{Berman1979, Horn1986})}\label{le9}
Let $A=(a_{ij})$ and $B=(b_{ij})$ be two $n\times n$ matrices with the spectral radius $\rho(A)$ and $\rho(B)$, respectively. If $0\leq A\leq B$, then $\rho(A)\leq\rho(B)$. Furthermore, if $0\leq A<B$, then $\rho(A)<\rho(B)$.
\end{lemma}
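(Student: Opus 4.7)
The plan is to reduce Lemma~\ref{le9} to two classical facts about nonnegative matrices: monotonicity of entrywise powers, and Gelfand's spectral radius formula $\rho(M)=\lim_{k\to\infty}\|M^k\|^{1/k}$, valid for any submultiplicative norm.

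For the weak inequality, I would first prove by induction on $k$ that $0\le A^k\le B^k$ entrywise for every positive integer $k$. The base case $k=1$ is the hypothesis, and the inductive step reads
\[(A^{k+1})_{ij}=\sum_{\ell=1}^{n}(A^k)_{i\ell}\,A_{\ell j}\le\sum_{\ell=1}^{n}(B^k)_{i\ell}\,B_{\ell j}=(B^{k+1})_{ij},\]
which uses nonnegativity of every factor together with the inductive hypothesis and $A\le B$. Next I would fix a monotone matrix norm — for instance the Frobenius norm $\|M\|_F=(\sum_{i,j}m_{ij}^{2})^{1/2}$, or the maximum row sum $\|M\|_\infty=\max_i\sum_j|m_{ij}|$, both of which satisfy $0\le M\le N\Rightarrow\|M\|\le\|N\|$. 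Monotonicity of the powers then forces $\|A^k\|\le\|B^k\|$, hence $\|A^k\|^{1/k}\le\|B^k\|^{1/k}$, and letting $k\to\infty$ in Gelfand's formula yields $\rho(A)\le\rho(B)$.

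The strict inequality is the main obstacle, since as literally stated ($A<B$ with no further hypothesis) the conclusion can fail: $A=\mathrm{diag}(1,0)<\mathrm{diag}(1,1)=B$ are a two-dimensional counterexample with $\rho(A)=\rho(B)=1$. The authors implicitly import the irreducibility assumption on $B$ present in the Perron--Frobenius formulations of Berman--Plemmons and Horn--Johnson; this is harmless for the applications in this paper, where $B$ is always an adjacency matrix of a connected graph. Granting irreducibility of $B$, I would argue as follows. By the Perron--Frobenius theorem, $B$ admits a strictly positive left Perron eigenvector $y>0$ with $y^{T}B=\rho(B)\,y^{T}$, and $A$ admits a nonzero nonnegative right Perron eigenvector $x\ge 0$ with $Ax=\rho(A)\,x$. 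Then
\[\rho(A)\,(y^{T}x)=y^{T}Ax\le y^{T}Bx=\rho(B)\,(y^{T}x),\]
and $y^{T}x>0$ because $y>0$ and $x$ is nonzero nonnegative, so dividing recovers $\rho(A)\le\rho(B)$. Suppose for contradiction that equality holds. Then $y^{T}(B-A)x=0$, and $y>0$ together with $B-A\ge 0$ forces $(B-A)x=0$, i.e.\ $Bx=Ax=\rho(B)\,x$. Thus $x$ is a right Perron eigenvector of the irreducible matrix $B$, so $x>0$. Combining $x>0$ with $(B-A)x=0$ and $B-A\ge 0$ yields $B-A=0$, contradicting $A<B$. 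Hence $\rho(A)<\rho(B)$, which completes the proof.
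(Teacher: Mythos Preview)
The paper does not prove this lemma at all; it is quoted as a standard result from \cite{Berman1979, Horn1986} and used as a black box. So there is nothing to compare your argument against on the paper's side.

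Your proof is nonetheless sound, and you correctly flag a genuine defect in the lemma as stated: without irreducibility of $B$, the strict conclusion $\rho(A)<\rho(B)$ is false, as your example $A=\mathrm{diag}(1,0)<B=\mathrm{diag}(1,1)$ shows. The versions in Berman--Plemmons and Horn--Johnson both carry an irreducibility hypothesis on $B$, and the paper only ever invokes the lemma with $B$ the adjacency matrix of a connected graph (hence irreducible), so the slip is harmless in context. Your Perron--Frobenius argument for the strict case under irreducibility of $B$ is the standard one and is correct; the Gelfand-formula route for the weak inequality is also fine, though one could equally well get it directly from the same eigenvector sandwich you use in the strict case.
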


The following observation is very useful when we use the above upper bound on $\rho(G).$

\begin{lemma} {\normalfont(\cite{Hong1988})}\label{le4}
Let $G$ be a connected graph with $n$ vertices. Then
$$\rho(G)\leq \sqrt{2e(G)-n+1}.$$
\end{lemma}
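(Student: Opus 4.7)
The plan is to work directly with the Perron eigenvector of $A(G)$, evaluate the two-step eigenvalue equation at the vertex where that eigenvector is maximum, and then bound the resulting sum via a handshake identity. Connectedness of $G$ enters in two essential but modest ways: it makes $A(G)$ irreducible, so that Perron--Frobenius produces a strictly positive eigenvector, and it forces every vertex to have degree at least one.

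Concretely, let $\mathbf{x}=(x_1,\ldots,x_n)^T$ be a Perron eigenvector of $A(G)$ associated with $\rho(G)$, chosen strictly positive. Rescale $\mathbf{x}$ so that $\max_i x_i = 1$, and pick a vertex $v^*\in V(G)$ with $x_{v^*}=1$. The $v^*$-coordinate of the identity $A(G)^2\mathbf{x}=\rho(G)^2\mathbf{x}$ reads
$$\rho(G)^2 \;=\; \rho(G)^2\, x_{v^*} \;=\; \sum_{u\sim v^*}\sum_{w\sim u} x_w \;\leq\; \sum_{u\sim v^*} d_G(u),$$
where the final inequality uses $x_w\leq 1$ for every $w\in V(G)$.

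Next, I would estimate $\sum_{u\sim v^*} d_G(u)$ from above using the handshake identity. Partitioning $V(G)$ into $\{v^*\}$, $N_G(v^*)$, and the remaining $n-1-d_G(v^*)$ vertices gives
$$\sum_{u\sim v^*} d_G(u) \;=\; 2e(G)-d_G(v^*)-\sum_{u\in V(G)\setminus (N_G(v^*)\cup\{v^*\})} d_G(u).$$
Since $G$ is connected and $n\geq 2$, every vertex has degree at least one, so the right-hand sum is at least $n-1-d_G(v^*)$. Substituting yields $\sum_{u\sim v^*} d_G(u) \leq 2e(G) - n + 1$, and combining with the previous display gives $\rho(G)^2 \leq 2e(G)-n+1$, which rearranges to the claim.

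The main obstacle is conceptual rather than computational: the right anchor for the argument must be chosen. Applying the two-step eigenvector equation at the maximum-entry vertex is what makes the crude bound $x_w\leq 1$ tight enough to propagate, and the handshake identity combined with the minimum-degree-one consequence of connectedness is what accounts for the $n-1$ improvement over the trivial spectral bound $\rho(G)\leq\sqrt{2e(G)}$ that would follow from the trace identity $\sum_i \lambda_i^2 = 2e(G)$ alone.
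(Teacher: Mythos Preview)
Your argument is correct and complete. The paper does not give its own proof of this lemma; it simply cites the result from Hong~\cite{Hong1988}, and the argument you wrote is essentially Hong's original one (apply $A(G)^2\mathbf{x}=\rho(G)^2\mathbf{x}$ at a coordinate where the Perron vector is maximal, then bound $\sum_{u\sim v^*}d_G(u)\le 2e(G)-n+1$ using the handshake identity together with $\delta(G)\ge 1$).
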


\medskip
\noindent \textbf{Proof of Theorem \ref{th5}.}
Let $G$ be a $t$-connected graph of order $n\geq\max\{\frac{15t-11k+29}{2}, t^{2}+\frac{5}{2}t+2\}$, where $t\geq k\geq1$. Suppose to the contrary that $G$ is not $k$-factor-critical. Note that $K_{n+k-t-1}$ is a proper subgraph of $K_{t}\vee (K_{n+k-2t-1}+(t-k+1)K_{1})$. By Lemma \ref{le9}, we have
\begin{align}\label{in1}
\rho(G)\geq \rho(K_{t}\vee (K_{n+k-2t-1}+(t-k+1)K_{1}))>\rho(K_{n+k-t-1})=n+k-t-2.
\end{align}
Combining the above Inequality (\ref{in1}) and Lemma \ref{le4}, we obtain
\begin{align*}
\sqrt{2e(G)-n+1}\geq\rho(G)>n+k-t-2.
\end{align*}
Thus, we can get
\begin{align*}
e(G)>&\frac{(n-t+k-2)^{2}+n-1}{2}\\
=&{n+k-t-2\choose 2}+(t-k+2)(t+1)+n+\frac{3}{2}k+kt-t^{2}-\frac{7}{2}t-\frac{7}{2}\\
\geq&{n+k-t-2\choose 2}+(t-k+2)(t+1)+n+\frac{3}{2}+t-t^{2}-\frac{7}{2}t-\frac{7}{2}\\
\geq&{n+k-t-2\choose 2}+(t-k+2)(t+1)+n-t^{2}-\frac{5}{2}t-2\\
\geq&{n+k-t-2\choose 2}+(t-k+2)(t+1)
\end{align*}
for $n\geq\max\{\frac{15t-11k+29}{2}, t^{2}+\frac{5}{2}t+2\}$. Let $H\cong C_{n+k-1}(G)$. By Theorem \ref{th4}, we have $H\cong K_{t}\vee (K_{n+k-2t-1}+(t-k+1)K_{1})$. Note that $G\subseteq H$, so we have
$$\rho(G)\leq \rho(H)=\rho(K_{t}\vee (K_{n+k-2t-1}+(t-k+1)K_{1})).$$
Combining this with the condition $\rho(G)\geq \rho(K_{t}\vee (K_{n+k-2t-1}+(t-k+1)K_{1}))$, we have $G\cong H=K_{t}\vee (K_{n+k-2t-1}+(t-k+1)K_{1})$. Observe that $K_{t}\vee (K_{n+k-2t-1}+(t-k+1)K_{1})$ is not $k$-factor-critical. Thus the result follows.
\hspace*{\fill}$\Box$
\section*{Declaration of competing interest}
The authors declare that they have no conflict of interest.

\section*{Data availability}
No data was used for the research described in the article.

\section*{Acknowledgments}

The authors would like to thank the two anonymous referees for their helpful comments and valuable suggestions on improving the presentation of the paper.

\end{document}